\documentclass[review,12pt]{elsarticle}
\usepackage{amsmath,amssymb,amsthm}
\usepackage{graphicx}
\usepackage{booktabs}
\usepackage[colorlinks=true,linkcolor=black,citecolor=black,urlcolor=blue]{hyperref}

\newtheorem{theorem}{Theorem}
\newtheorem{lemma}{Lemma}
\newtheorem{proposition}{Proposition}
\newtheorem{corollary}{Corollary}
\newtheorem{conjecture}{Conjecture}
\theoremstyle{remark}
\newtheorem{remark}{Remark}

\newcommand{\Z}{\mathbb{Z}}
\newcommand{\F}{\mathbb{F}}
\newcommand{\Cay}{\mathrm{Cay}}
\newcommand{\diam}{\mathrm{diam}}
\newcommand{\kmin}{k_{\min}}

\journal{European Journal of Combinatorics}

\begin{document}
\begin{frontmatter}

\title{Dimension and Order Bounds for Isometric Embeddings of Graphs into
Abelian Cayley Graphs, and the Abelian Dividend}

\author[lesia]{Rigobert Fokam Souop\corref{cor}}
\ead{fokamrigobert@gmail.com}
\author[lesia,saiam]{Laurent Bitjoka}
\cortext[cor]{Corresponding author.}
\address[lesia]{Laboratory of Energy, Signal, Imaging and Automation (LESIA),
University of Ngaound\'er\'e, Ngaound\'er\'e, Cameroon}
\address[saiam]{Laboratory of Scientific Artificial Intelligence and Applied
Mathematics, University of Ngaound\'er\'e, Ngaound\'er\'e, Cameroon}

\begin{abstract}
A finite connected graph $G$ on $n$ vertices embeds isometrically into a Cayley
graph of a finite abelian group; the companion construction \cite{fokam-p1}
realizes such an embedding with binary host dimension at most $n-1$. Here we
quantify how small the host can be. We prove the dimension lower bound
$\kmin(G)\ge\max(\diam(G),\lceil\log_2 n\rceil)$ for binary hosts and the order
lower bound $\nu(G)\ge\max(n,2\,\diam(G))$ for general abelian hosts, and we
show that $\nu(G)=n$ if and only if $G$ is itself an abelian Cayley graph. We
determine the binary dimension exactly for several families: hypercubes,
complete graphs of order $2^t$, and even cycles attain the lower bound; stars
satisfy $\kmin(K_{1,q})=\lceil\log_2 q\rceil+1$ via maximum sum-free sets in
$\Z_2^k$, an exponential gap below the naive and isometric dimensions even on
trees; and odd cycles satisfy $\kmin(C_m)=m-1$ (proved for $m\le 17$ and
reduced in general to a cyclic-interval lemma), so the universal upper bound is
tight. The empirical centerpiece is an exhaustive census of all $995$ connected
graphs on $2\le n\le 7$ vertices, run with a certified search over general
sublattice compactifications. It reveals what we call the \emph{abelian
dividend}: $569$ of the $995$ graphs ($57\%$) admit a certified abelian host
strictly smaller than the best binary host found, and $707$ ($71\%$) admit an
optimal host containing a cyclic factor $\Z_m$ with $m>2$. Compact non-binary
hosts are thus the rule rather than the exception on small graphs, while the
binary host retains its role as the universally guaranteed construction. Only
$17$ of the $995$ graphs attain the order floor $\max(n,2\,\diam)$ exactly;
the floor characterizes highly structured hosts, and the typical dividend is a
modest but strict compression (median $1.6\times$, maximum $9\times$).
\end{abstract}

\begin{keyword}
isometric embedding \sep Cayley graph \sep abelian group \sep sum-free set
\sep partial cube \sep vertex-transitive graph \sep graph census
\MSC[2020] 05C12 \sep 05C25 \sep 05C30 \sep 11B75 \sep 20K01
\end{keyword}
\end{frontmatter}

\section{Introduction}
\label{sec:intro}

A graph $G$ embeds isometrically into a Cayley graph $\Cay(\Gamma,S)$ of a
finite abelian group $\Gamma$ if there is an injection
$\phi\colon V(G)\to\Gamma$ under which graph distance equals the word metric of
$\Cay(\Gamma,S)$. The study of isometric embeddings of graphs into structured
hosts is classical. Its best-developed chapter concerns hypercubes: Firsov
\cite{firsov1965} first asked which graphs embed isometrically into a Boolean
cube, Graham and Pollak \cite{graham-pollak1971} introduced the addressing
problem for loop switching (resolved by Winkler's proof of the squashed-cube
conjecture \cite{winkler1983}), and Djokovi\'c \cite{djokovic1973} and Winkler
\cite{winkler1984} characterized the isometric subgraphs of hypercubes---the
partial cubes---through the edge relation~$\theta$. Ovchinnikov's survey
\cite{ovchinnikov2008} and the monographs of Imrich, Klav\v{z}ar, and their
coauthors \cite{imrich-klavzar2000,hammack2011} give modern accounts.

Beyond the hypercube, three strands of literature frame our question. First,
isometric embeddings into \emph{Hamming graphs} (products of complete graphs):
Winkler \cite{winkler1984} and Wilkeit \cite{wilkeit1990} characterized the
embeddable graphs and gave polynomial-time algorithms, with the recognition
complexity subsequently sharpened \cite{aurenhammer1995,imrich-klavzar1996}.
Second, the \emph{canonical isometric embedding} of Graham and Winkler
\cite{graham-winkler1985} represents any graph in a product of quotient
graphs; Eppstein's lattice dimension \cite{eppstein2005} measures the minimal
integer-lattice representation. Third, the \emph{scale} and $\ell_1$ theory:
Shpectorov \cite{shpectorov1993} characterized $\ell_1$-graphs via scale
embeddings into hypercubes, with efficient recognition by Deza and Shpectorov
\cite{deza-shpectorov1996}; the monograph of Deza and Laurent
\cite{deza-laurent1997} is the standard reference, and Laurent
\cite{laurent1994} treats hypercube embeddings of distances. The subject
remains active, with recent work on Hamming embeddings of weighted graphs
\cite{berleant2023,sheridan2021} and binary stretch embeddings
\cite{ebrahimi2025}. For the broader metric-graph-theory context---including the structure theory
of graph classes defined by metric conditions, such as retracts of hypercubes
and distance-hereditary graphs \cite{bandelt1984,bandelt-mulder1986}---we
refer to the survey of Bandelt and Chepoi \cite{bandelt-chepoi2008}.

All these hosts are special abelian Cayley graphs: the hypercube is
$\Cay(\Z_2^p,\{e_1,\dots,e_p\})$ and the Hamming graphs are products of
complete circulants. The companion paper \cite{fokam-p1} develops the general
question---isometric embedding into an \emph{arbitrary} abelian Cayley
graph---by a quotient labeling theorem computed with the Smith normal form
(for the role of the Smith normal form in combinatorics see Stanley's survey
\cite{stanley2016}), and proves the universal upper bound: every connected
graph on $n$ vertices embeds isometrically into $\Z_2^{n-1}$. This paper
studies the two natural extremal quantities. For binary hosts, let
\[
\kmin(G)=\min\{k:\ G\hookrightarrow\Cay(\Z_2^k,S)\ \text{isometrically for
some } S\};
\]
for general abelian hosts, let $\nu(G)=\min|\Gamma|$ over all isometric
embeddings into abelian Cayley graphs. The construction of \cite{fokam-p1}
gives $\kmin(G)\le n-1$, hence $\nu(G)\le 2^{\,n-1}$; the question is how far
below these bounds a given graph can go, and what the \emph{typical} position
is.

\paragraph{Results}
We prove (Section~\ref{sec:bounds}) the dimension lower bound
$\kmin(G)\ge\max(\diam(G),\lceil\log_2 n\rceil)$ and the order lower bound
$\nu(G)\ge\max(n,2\,\diam(G))$, together with the characterization
$\nu(G)=n\iff G$ is an abelian Cayley graph (Theorem~\ref{thm:equality}). We
then determine $\kmin$ exactly for several families
(Section~\ref{sec:families}): hypercubes, complete graphs $K_{2^t}$, and even
cycles attain the lower bound, while stars realize an exponential gap,
$\kmin(K_{1,q})=\lceil\log_2 q\rceil+1$, through the extremal theory of
sum-free sets in $\Z_2^k$ (Theorem~\ref{thm:stars}); the relevant extremal
theory of sum-free sets in abelian groups goes back to Rhemtulla and Street
\cite{rhemtulla-street1970} and was completed by Green and Ruzsa
\cite{green-ruzsa2005}. Odd cycles go to the other extreme
(Section~\ref{sec:oddcycles}): $\kmin(C_m)=m-1$, proved for all odd $m\le 17$
and reduced in general to a cyclic-interval lemma (Theorem~\ref{thm:odd},
Conjecture~\ref{conj:interval}), so the universal upper bound cannot be
improved. The window $[\max(\diam,\lceil\log_2 n\rceil),\,n-1]$ is therefore
filled at both ends (Figure~\ref{fig:window}).

The empirical centerpiece (Section~\ref{sec:dividend}) is an exhaustive census
of all $995$ connected graphs on $2\le n\le 7$ vertices \cite{read-wilson1998},
carried out with a certified pipeline whose compactification stage searches
general (including non-diagonal) sublattice folds. It reveals the
\emph{abelian dividend}: a majority of small connected graphs---$569$ of
$995$, or $57\%$---admit a certified abelian host strictly smaller than the
best binary host found, and $71\%$ admit an optimal host with a cyclic factor
$\Z_m$, $m>2$. The binary host retains its structural role as the universally
guaranteed construction and remains optimal for a substantial minority, but
compact non-binary hosts are the rule, not the exception, on small graphs. We
report the methodology, its guarantees, and its limitations in full in
Section~\ref{sec:dividend}, including the caveat that both sides of the
comparison are algorithmic upper bounds; every reported abelian win is,
however, a certified isometric embedding, so the direction of the phenomenon
is not in doubt.

Our motivation is partly applied: isometric abelian hosts equip a graph with
the harmonic analysis of a finite abelian group, connecting graph signal
processing \cite{shuman2013,sandryhaila-moura2013,ortega2018} and spectral
methods \cite{chung1997,hammond2011} to a classical Fourier theory; this
direction is developed in two further companion papers
\cite{fokam-p3,fokam-p4} and in the first author's dissertation
\cite{fokam-diss}.

Throughout, $G=(V,E)$ is finite, connected, with $n=|V|$, $m=|E|$, metric $d$,
and diameter $\diam(G)$; $\Cay(\Gamma,S)$ has the word metric $|\cdot|_S$. We
use the quotient framework of \cite{fokam-p1} freely, in particular the rank
$\rho$ of the (signed) cycle--class matrix, which governs the dimension
$k=t-\rho$ of the quotient host on $t$ classes.

\section{Lower bounds}
\label{sec:bounds}

\subsection{Dimension}

\begin{lemma}[Geodesic independence]\label{lem:geodindep}
If $s_1,\dots,s_d\in S$ are the generators of a geodesic word in
$\Cay(\Z_2^k,S)$, i.e.\ $|s_1+\dots+s_d|_S=d$, then $s_1,\dots,s_d$ are
linearly independent over $\F_2$; in particular all $2^d$ subset sums are
distinct.
\end{lemma}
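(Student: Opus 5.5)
The plan is to argue by contradiction, using only the definition of the word metric and the fact that every element of $\Z_2^k$ is its own inverse. Suppose $s_1,\dots,s_d\in S$ satisfy $|s_1+\dots+s_d|_S=d$ but are linearly dependent over $\F_2$. Then some nonempty subset $I\subseteq\{1,\dots,d\}$ has $\sum_{i\in I}s_i=0$. Removing these terms leaves the sum unchanged:
\[
s_1+\dots+s_d=\sum_{i\notin I}s_i .
\]
The right-hand side is a word in $S$ of length $d-|I|<d$, so $|s_1+\dots+s_d|_S\le d-|I|<d$. This contradicts geodesicity.

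Two edge cases need a check. Linear dependence also covers a repeated generator, $s_i=s_j$ with $i\ne j$: then $I=\{i,j\}$ works, since $s_i+s_j=2s_i=0$ in characteristic $2$. It also covers $s_i=0$, which cannot occur because $0\notin S$ for a Cayley graph (and even if it did, $I=\{i\}$ works). So "independent" can be read as a statement about the multiset of the $s_i$, which is what the geodesic hypothesis controls.

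For the final clause, if $\sum_{i\in A}s_i=\sum_{i\in B}s_i$ for subsets $A\ne B$, then adding the two sides gives $\sum_{i\in A\triangle B}s_i=0$ with $A\triangle B\neq\emptyset$. This contradicts independence, so the $2^d$ subset sums are pairwise distinct. Equivalently, $s_1,\dots,s_d$ span a $d$-dimensional subspace, so there are $2^d$ distinct combinations. As a consequence $d\le k$, which is what the dimension bound $\kmin(G)\ge\diam(G)$ will use.

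There is no real obstacle here. The only point needing care is that word length is a minimum over all words, so exhibiting one shorter word already suffices; the argument does not need to be constructive beyond that.
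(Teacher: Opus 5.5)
Your proof is correct and is essentially the paper's argument. The paper first shows that every sub-multiset of a geodesic word is geodesic, by substituting a shorter word, and then applies this to the zero-sum set $T\triangle T'$; you go directly to that special case by deleting a zero-sum subset $I$, which is all the lemma needs.
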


\begin{proof}
We first show that every sub-multiset of a geodesic word is geodesic. Let
$T\subseteq\{1,\dots,d\}$ and suppose $\bigl|\sum_{i\in T}s_i\bigr|_S=\ell<|T|$,
witnessed by a word $w_1,\dots,w_\ell\in S$ with
$\sum_j w_j=\sum_{i\in T}s_i$. Replacing the letters $\{s_i:i\in T\}$ of the
original word by $w_1,\dots,w_\ell$ leaves the total sum unchanged (the group
is abelian, so letters may be reordered freely) and yields a word of length
$d-|T|+\ell<d$ for $s_1+\dots+s_d$, contradicting geodesicity. Hence
$\bigl|\sum_{i\in T}s_i\bigr|_S=|T|$ for every $T$.

Now suppose two distinct subsets $T\ne T'$ have equal sums. Then the symmetric
difference $T\,\triangle\,T'$ is nonempty and, since every element of
$\Z_2^k$ is an involution, $\sum_{i\in T\triangle T'}s_i=0$, so
$\bigl|\sum_{i\in T\triangle T'}s_i\bigr|_S=0<|T\triangle T'|$, contradicting
the previous paragraph. Distinct subset sums over all $2^d$ subsets is
precisely linear independence over $\F_2$.
\end{proof}

\begin{theorem}[Dimension lower bound]\label{thm:dimlb}
For every connected graph $G$ on $n$ vertices,
$\kmin(G)\ge\max(\diam(G),\lceil\log_2 n\rceil)$.
\end{theorem}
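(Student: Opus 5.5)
The two bounds are independent, so I will prove each separately and take the maximum. Fix an isometric embedding $\phi\colon V(G)\to\Cay(\Z_2^k,S)$ with $k=\kmin(G)$. The bound $k\ge\lceil\log_2 n\rceil$ is immediate from injectivity: $\phi$ maps $n$ distinct vertices into a group of order $2^k$, so $2^k\ge n$. Because $k$ is an integer, this gives $k\ge\lceil\log_2 n\rceil$.

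For the diameter bound, choose vertices $u,v$ with $d(u,v)=D=\diam(G)$. Since $\phi$ is isometric, $|\phi(v)-\phi(u)|_S=D$. Hence there are generators $s_1,\dots,s_D\in S$ with $s_1+\dots+s_D=\phi(v)-\phi(u)$, and this word has minimal length. That is exactly the hypothesis of Lemma~\ref{lem:geodindep}, which tells us that $s_1,\dots,s_D$ are linearly independent over $\F_2$. A linearly independent family in the $k$-dimensional $\F_2$-vector space $\Z_2^k$ has at most $k$ elements, so $D\le k$.

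Taking the maximum of the two bounds proves the theorem. I do not expect a real obstacle. The only points to check are that an isometry preserves the word-metric distance between $\phi(u)$ and $\phi(v)$, so that some word of length exactly $D$ realizes it, and that the lemma applies to an arbitrary minimal word. Lemma~\ref{lem:geodindep} already carries the substantive content: it rules out a repeated generator and more generally any $\F_2$-dependence among the letters of a geodesic.
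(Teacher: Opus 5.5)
Your proposal is correct and follows the paper's proof essentially verbatim. Injectivity gives $2^k\ge n$, and applying Lemma~\ref{lem:geodindep} to a geodesic word of length $\diam(G)$ between a diametral pair gives $\diam(G)\le k$.
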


\begin{proof}
Isometric maps are injective, so $2^k\ge n$, giving
$k\ge\lceil\log_2 n\rceil$. For the diameter term, choose $u,v$ with
$d(u,v)=\diam(G)=:d$; isometry yields a geodesic word of length $d$ whose
generators are linearly independent by Lemma~\ref{lem:geodindep}, so the
ambient space has dimension at least $d$.
\end{proof}

\begin{proposition}[Tightness of each term]\label{prop:tight}
\begin{itemize}
\item[(i)] $\kmin(Q_t)=t=\diam(Q_t)=\log_2|V(Q_t)|$ (both terms at once);
\item[(ii)] $\kmin(K_{2^t})=t$, the logarithmic term at diameter $1$,
realized by $K_{2^t}=\Cay(\Z_2^t,\Z_2^t\setminus\{0\})$;
\item[(iii)] $\kmin(C_{2d})=d=\diam(C_{2d})$, realized by the $d$
antipodal-pair classes, which are cuts.
\end{itemize}
\end{proposition}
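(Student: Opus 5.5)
In each of the three cases the lower bound comes from Theorem~\ref{thm:dimlb}, so the work is to exhibit a matching binary host and check that the embedding is isometric.

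\emph{Case (i).} For $Q_t$ we have $\diam(Q_t)=t$ and $\lceil\log_2 2^t\rceil=t$, so Theorem~\ref{thm:dimlb} gives $\kmin(Q_t)\ge t$. The identity map $Q_t=\Cay(\Z_2^t,\{e_1,\dots,e_t\})$ is an isometric embedding into $\Z_2^t$, which gives the matching upper bound.

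\emph{Case (ii).} For $K_{2^t}$ the lower bound is $\lceil\log_2 2^t\rceil=t$. Any bijection $V(K_{2^t})\to\Z_2^t$ with $S=\Z_2^t\setminus\{0\}$ is isometric. Indeed, distinct vertices map to distinct group elements, their difference is a nonzero element of $S$, so it has word length $1$, matching graph distance $1$.

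\emph{Case (iii).} For $C_{2d}$ the lower bound is $\diam(C_{2d})=d$. Index the edges cyclically as $\epsilon_0,\dots,\epsilon_{2d-1}$ and pair the antipodal edges $\epsilon_i,\epsilon_{i+d}$ into classes $i=0,\dots,d-1$. Label the vertex $v_j$ by
\[
\phi(v_j)=\sum_{l<j} e_{\,l \bmod d}\in\Z_2^d ,
\]
and take $S=\{e_0,\dots,e_{d-1}\}$, so the host is $Q_d$. The labeling closes up around the cycle, since going fully around uses each $e_i$ twice, giving total sum $0$.

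Isometry is then a direct computation. For $0\le j<j'\le j+d$, the difference $\phi(v_j)-\phi(v_{j'})$ is a sum of $j'-j\le d$ consecutive residues modulo $d$. These residues are distinct, so the difference is a $0/1$ vector of Hamming weight exactly $j'-j$. This equals $d(v_j,v_{j'})$ whenever $j'-j\le d$, which is always achievable by choosing the shorter arc.

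This is the Djokovi\'c--Winkler picture: each antipodal class is a cut, and $C_{2d}$ is a partial cube of isometric dimension $d$. In the quotient framework of \cite{fokam-p1} this corresponds to $t=d$ classes with $\rho=0$. The main, though mild, obstacle is this weight check: we must verify that no cancellation occurs, which holds precisely because a window of length at most $d$ never contains both edges of the same antipodal pair.
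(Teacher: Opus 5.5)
Your proof is correct and follows the paper's: all three lower bounds come from Theorem~\ref{thm:dimlb}, and the upper bounds use the same three hosts, including the antipodal-class labeling of $C_{2d}$ into $Q_d$. The only difference is in (iii). You verify isometry directly by the Hamming-weight count, using that no window of at most $d$ consecutive edges contains an antipodal pair. The paper instead argues that the antipodal classes are cuts, so the cycle--class parity matrix vanishes, and cites the classical embedding $C_{2d}\hookrightarrow Q_d$. Your version is self-contained and does not rely on the companion paper's quotient theorem.
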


\begin{proof}
(i) $Q_t$ embeds into itself and the bound gives $k\ge\diam=t$. (ii) The
displayed Cayley graph is $K_{2^t}$: every nonzero element is a generator, so
all pairs of distinct vertices are adjacent; the bound gives
$k\ge\lceil\log_2 2^t\rceil=t$. (iii) The $d$ antipodal classes of $C_{2d}$
are edge cuts, so every cycle crosses each an even number of times, the
cycle--class parity matrix vanishes, and the quotient dimension is $d$; this
is the classical isometric embedding $C_{2d}\hookrightarrow Q_d$
\cite{djokovic1973}, and the bound gives $k\ge\diam=d$.
\end{proof}

The bound is not always attained; Sections~\ref{sec:families}
and~\ref{sec:oddcycles} determine the exact value for two families witnessing
the gap in opposite directions.

\subsection{Order}

The natural currency for general abelian hosts is the order $|\Gamma|$.

\begin{lemma}[Diameter of vertex-transitive graphs]\label{lem:vtdiam}
Every connected vertex-transitive graph $H$ on $N\ge 3$ vertices satisfies
$\diam(H)\le\lfloor N/2\rfloor$.
\end{lemma}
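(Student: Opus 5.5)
Fix a vertex $x$ of $H$ and let $D=\diam(H)$. Vertex-transitivity makes the eccentricity of every vertex equal to $D$, so some vertex $y$ has $d(x,y)=D$. Let $L_i=\{v: d(x,v)=i\}$ for $0\le i\le D$. Each layer is nonempty, since a geodesic from $x$ to $y$ meets every $L_i$, and $L_0=\{x\}$. The aim is to show that on average the layers contain about two vertices, which gives $N\gtrsim 2D$.

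The key step is a symmetry count. Because $H$ is vertex-transitive, the number $a_i=|L_i|$ of vertices at distance $i$ from a vertex does not depend on the vertex. I will use the following claim: if $1\le i\le D-1$ and $a_i=1$, say $L_i=\{z\}$, then $z$ is a cut vertex separating $x$ from $y$. Indeed, every path from $x$ to $y$ must pass through $L_i$, because distances to $x$ change by at most $1$ along an edge.

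A vertex-transitive connected graph on $N\ge 3$ vertices has no cut vertex. If some vertex were a cut vertex, every vertex would be one by transitivity. But in any connected graph on at least $2$ vertices, an endpoint of a longest path, or a leaf of a spanning tree, is not a cut vertex. Hence $a_i\ge 2$ for all $1\le i\le D-1$.

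Counting layers then gives
\[
N=\sum_{i=0}^{D}a_i\ge 1+2(D-1)+1=2D,
\]
so $D\le N/2$. Since $D$ is an integer, $D\le\lfloor N/2\rfloor$. The hypothesis $N\ge3$ is used exactly in the no-cut-vertex step. For $N=2$ the graph is $K_2$, where $D=1=\lfloor 2/2\rfloor$ anyway, but the stated range avoids the degenerate case. The case $D=1$ is trivial, since then $D\le \lfloor N/2\rfloor$ whenever $N\ge 2$.

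The main obstacle is justifying that the middle layers have size at least $2$; the cut-vertex argument handles this cleanly. An alternative would be to pair $L_i$ with the layers seen from $y$ using an automorphism mapping $x$ to $y$, but that is less direct. Equality holds for even cycles, consistent with the order bound $\nu(G)\ge 2\,\diam(G)$ that this lemma is evidently meant to support.
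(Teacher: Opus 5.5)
Your proof is correct, but it takes a different and more elementary route than the paper. The paper cites the connectivity bound $\kappa(H)\ge\frac{2}{3}(r+1)$ for connected vertex-transitive graphs of degree $r\ge 2$ to get $2$-connectivity. It then invokes Menger's theorem to place a diametral pair $x,y$ on a common cycle of length at most $N$, whose shorter arc has length at most $\lfloor N/2\rfloor$.

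You replace both ingredients:
\begin{itemize}
\item \emph{No cut vertex.} You get this from transitivity together with the observation that a leaf of a spanning tree is never a cut vertex.
\item \emph{Counting.} Menger is replaced by breadth-first layer counting: a singleton middle layer would be a cut vertex separating $x$ from $y$, so every middle layer has at least two vertices.
\end{itemize}
What your route buys is self-containment. You use transitivity only to rule out cut vertices, so the argument visibly proves $N\ge 2\,\diam$ for every connected graph without a cut vertex, and it avoids a nontrivial cited theorem. The paper's route instead states explicitly the structural fact that any two vertices lie on a common cycle, at the cost of those citations.

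Two small remarks on your write-up:
\begin{itemize}
\item Two facts are never used: that $a_i$ is independent of the base vertex, and that all eccentricities equal $D$. Any diametral pair works.
\item Your claim that $N\ge 3$ is used in the no-cut-vertex step is not quite right. The spanning-tree-leaf argument works for every $N\ge 2$, so your proof also covers $K_2$. In the paper the hypothesis is needed only because $K_2$ is not $2$-connected.
\end{itemize}
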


\begin{proof}
Connected vertex-transitive graphs on at least three vertices are
$2$-connected (see, e.g., \cite[Ch.~3]{godsil-royle2001}): a connected
vertex-transitive graph is regular of some degree $r\ge 2$, and its
vertex-connectivity is at least $\tfrac{2}{3}(r+1)>1$. By Menger's theorem any
two vertices $x,y$ lie on a common cycle, i.e.\ are joined by two internally
disjoint paths; the lengths of these paths sum to at most $N$, so the shorter
has length at most $\lfloor N/2\rfloor$, whence
$d_H(x,y)\le\lfloor N/2\rfloor$.
\end{proof}

\begin{theorem}[Order lower bound]\label{thm:orderlb}
For every connected graph $G$ on $n\ge 3$ vertices,
$\nu(G)\ge\max(n,2\,\diam(G))$.
\end{theorem}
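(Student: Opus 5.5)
The plan is to bound $|\Gamma|$ from below for an arbitrary isometric embedding $\phi\colon V(G)\to\Cay(\Gamma,S)$ of an abelian Cayley graph, and then minimize over hosts. Two separate inequalities are needed.

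The term $n$ is immediate. An isometric map is injective, since distinct vertices are at positive distance and their images must therefore differ. Hence $|\Gamma|\ge n$.

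The term $2\,\diam(G)$ will come from Lemma~\ref{lem:vtdiam}. First I will arrange for the host to be connected. If $\langle S\rangle\ne\Gamma$, I will translate so that $\phi$ of some vertex is $0$. Because $G$ is connected, every $\phi(v)$ lies at finite word distance from $0$, so the whole image sits inside $\langle S\rangle$. Replacing $\Gamma$ by $\langle S\rangle$ keeps the embedding isometric, because word distances inside a coset of $\langle S\rangle$ do not depend on the ambient group, and it can only decrease the order. So we may assume $H=\Cay(\Gamma,S)$ is connected. Cayley graphs are vertex-transitive. Since $N=|\Gamma|\ge n\ge 3$, Lemma~\ref{lem:vtdiam} gives $\diam(H)\le\lfloor N/2\rfloor$. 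Isometry then gives
\[
\diam(G)=d_H(\phi(u),\phi(v))\le\diam(H)\le\lfloor N/2\rfloor,
\]
where $u,v$ realize the diameter of $G$. Therefore $N\ge 2\,\diam(G)$.

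Combining the two inequalities, every admissible host satisfies $|\Gamma|\ge\max(n,2\,\diam(G))$, and so does the minimum $\nu(G)$.

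The only delicate point is the reduction to a connected host: Lemma~\ref{lem:vtdiam} needs connectivity, and we must also check that $N\ge 3$. The hypothesis $n\ge 3$ together with injectivity supplies $N\ge 3$, and the coset argument above supplies connectivity. I also need to confirm that the convention $S=-S$, $0\notin S$ makes $H$ a simple undirected vertex-transitive graph, which is what Lemma~\ref{lem:vtdiam} assumes.
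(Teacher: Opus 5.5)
Your proof is correct and follows the paper's argument. Injectivity gives $|\Gamma|\ge n$, and Lemma~\ref{lem:vtdiam} applied to the vertex-transitive host gives $|\Gamma|\ge 2\,\diam(G)$; your reduction to the connected host $\Cay(\langle S\rangle,S)$, with its order still at least $n\ge 3$, makes explicit the connectivity hypothesis of that lemma, which the paper's proof uses only implicitly.
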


\begin{proof}
Injectivity gives $|\Gamma|\ge n$. The host is vertex-transitive and, by
isometry, realizes a distance equal to $\diam(G)$; Lemma~\ref{lem:vtdiam}
gives $\diam(G)\le\lfloor|\Gamma|/2\rfloor$, i.e.\
$|\Gamma|\ge 2\,\diam(G)$.
\end{proof}

\begin{theorem}[Equality at $n$]\label{thm:equality}
$\nu(G)=n$ if and only if $G$ is itself a Cayley graph of an abelian group.
\end{theorem}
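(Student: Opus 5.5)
Both directions reduce to unwinding the definitions, once one observation about surjective isometric maps is in place.

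For the direction ($\Leftarrow$), suppose $G\cong\Cay(\Gamma,S)$ with $\Gamma$ abelian.
\begin{itemize}
\item Since $G$ has $n$ vertices, $|\Gamma|=n$.
\item The identity map $V(G)\to\Gamma$ is an isometric embedding into an abelian Cayley graph, because graph distance in $\Cay(\Gamma,S)$ is exactly the word metric $|\cdot|_S$.
\item Hence $\nu(G)\le n$. Combined with the injectivity bound $\nu(G)\ge n$ from Theorem~\ref{thm:orderlb}, we get $\nu(G)=n$.
\end{itemize}
The injectivity part of that bound needs no hypothesis on $n$. For $n\le 2$ we also have $G=K_1=\Cay(\Z_1,\emptyset)$ or $G=K_2=\Cay(\Z_2,\{1\})$, so no small case needs separate treatment.

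For the direction ($\Rightarrow$), suppose $\nu(G)=n$. Then there is an isometric embedding $\phi\colon V(G)\to\Gamma$ into some $\Cay(\Gamma,S)$ with $\Gamma$ abelian and $|\Gamma|=n$. Since $\phi$ is injective between sets of equal finite size, it is a bijection. I will show $\phi$ is a graph isomorphism $G\cong\Cay(\Gamma,S)$.

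The key step is that an isometric bijection preserves adjacency in both directions, because in both graphs adjacency means distance exactly $1$:
\begin{itemize}
\item $uv\in E(G)$ iff $d(u,v)=1$;
\item $d(u,v)=1$ iff $|\phi(u)-\phi(v)|_S=1$;
\item $|\phi(u)-\phi(v)|_S=1$ iff $\phi(u)-\phi(v)\in S$, i.e.\ iff $\phi(u)$ and $\phi(v)$ are adjacent in the Cayley graph.
\end{itemize}
The last equivalence uses the standing conventions that $0\notin S$ and $S=-S$. Because $\phi$ is onto, every edge of the Cayley graph arises this way. So $\phi$ is an isomorphism and $G$ is an abelian Cayley graph.

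I expect no real obstacle here. The one point to state explicitly is the conventions on $S$: it should be symmetric and exclude $0$, so that the Cayley graph is simple and undirected and its graph metric coincides with the word metric. This matters in particular when $|\cdot|_S$ is defined through sums of generators in a non-elementary group $\Gamma$.
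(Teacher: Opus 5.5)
Your proposal is correct and is essentially the paper's proof. For ($\Leftarrow$) you use the identity embedding plus the injectivity bound $|\Gamma|\ge n$; for ($\Rightarrow$) you observe that an isometric injection into a group of order $n$ is a bijection preserving distance-$1$ pairs in both directions, hence an isomorphism onto $\Cay(\Gamma,S)$. Your remark that only the injectivity half of Theorem~\ref{thm:orderlb} is needed, so its hypothesis $n\ge 3$ plays no role, is a small point of care that the paper's proof leaves implicit.
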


\begin{proof}
If $G=\Cay(\Gamma,S)$, the identity embedding gives $\nu(G)\le n$, and
Theorem~\ref{thm:orderlb} gives equality. Conversely, if $\phi$ is isometric
with $|\Gamma|=n$, then $\phi$ is a bijection onto $\Gamma$. Every host edge
$\{\phi(u),\phi(u)+s\}$ joins images of vertices at Cayley distance $1$, hence
at $G$-distance $1$ by isometry, so it is the image of a $G$-edge; and every
$G$-edge maps to a host edge, again by isometry. Thus $\phi$ is a graph
isomorphism $G\cong\Cay(\Gamma,S)$.
\end{proof}

\section{Exact dimensions: stars and the sum-free bound}
\label{sec:families}

One might expect that for graphs with no even cycle the naive embedding is
minimal. This is false, and the smallest counterexample is the star
$K_{1,4}$---a tree, hence even-cycle-free and even a partial cube. The exact
statement is governed by the extremal theory of sum-free sets
\cite{rhemtulla-street1970,green-ruzsa2005}.

\begin{theorem}[Exact dimension of stars]\label{thm:stars}
For every $q\ge 2$, $\kmin(K_{1,q})=\lceil\log_2 q\rceil+1$. In particular
$\kmin(K_{1,4})=3<4=n-1=\mathrm{idim}(K_{1,4})$, so stars realize an
exponential gap between $\kmin$ and both the naive dimension and the isometric
hypercube dimension.
\end{theorem}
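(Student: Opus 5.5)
The plan is to reduce the problem to a statement about sum-free sets in $\Z_2^k$ and then use the elementary extremal bound for such sets. Let $c$ be the centre of $K_{1,q}$ and $\ell_1,\dots,\ell_q$ its leaves. Cayley graphs are vertex-transitive (translations are automorphisms), so any isometric embedding $\phi\colon K_{1,q}\to\Cay(\Z_2^k,S)$ may be translated to satisfy $\phi(c)=0$. Write $x_i=\phi(\ell_i)$ and $X=\{x_1,\dots,x_q\}$. Isometry then translates into two conditions:
\begin{itemize}
\item $d(c,\ell_i)=1$ means $|x_i|_S=1$, so $x_i\in S$ and $X\subseteq S$;
\item $d(\ell_i,\ell_j)=2$ for $i\ne j$ means $|x_i+x_j|_S=2$, so $x_i+x_j\neq 0$ and $x_i+x_j\notin S$.
\end{itemize}
In particular, no $x_l\in X\subseteq S$ equals $x_i+x_j$ with $i\neq j$. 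The remaining cases are automatic in characteristic $2$: $x_i+x_i=0\notin X$ because $0\notin S$, and $x_i+x_j=x_i$ would force $x_j=0$. So $X$ is a sum-free subset of $\Z_2^k$ of size exactly $q$.

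\textbf{Lower bound.} I would use the standard counting argument for sum-free sets. Fix $a\in X$. The translate $a+X$ is disjoint from $X$, because $a+x\in X$ would contradict sum-freeness. Both sets have size $|X|$, so $2q\le 2^k$. This gives $q\le 2^{k-1}$, hence $k\ge\lceil\log_2 q\rceil+1$. This is the Rhemtulla--Street bound, which is attained in $\Z_2^k$.

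\textbf{Upper bound.} Set $k=\lceil\log_2 q\rceil+1$. The coset $H=\{x\in\Z_2^k: x^{(1)}=1\}$, i.e.\ the vectors with first coordinate $1$, has $2^{k-1}\ge q$ elements. Choose $q$ distinct elements of $H$ as $X$, take $S=X$, and map $c\mapsto 0$, $\ell_i\mapsto x_i$. Then $|x_i|_S=1$. For $i\ne j$, the sum $x_i+x_j$ is nonzero and has first coordinate $0$, so it lies outside $S\cup\{0\}$; since it is a sum of two generators, its word length is exactly $2$. Only distances between image points must match, so this is an isometric embedding, and the Cayley graph need not be connected for this purpose. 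Together with the lower bound this gives $\kmin(K_{1,q})=\lceil\log_2 q\rceil+1$.

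\textbf{The case $q=4$.} The formula gives $\kmin(K_{1,4})=3$. The isometric hypercube dimension of a tree equals its number of edges, since each edge forms its own Djokovi\'c--Winkler $\theta$-class. Hence $\mathrm{idim}(K_{1,4})=4=n-1$.

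\textbf{Main obstacle.} The only delicate step is justifying that the reduction loses nothing. A general host $S$ may contain many elements beyond $X$, and the embedding need not send $c$ to $0$. Extra generators only add the constraint $x_i+x_j\notin S$, which is stronger than sum-freeness, so the lower bound is unaffected. Translation invariance of the word metric handles the choice of $\phi(c)$.
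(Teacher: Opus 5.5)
Your proof is correct and follows the paper's route. You normalize the centre to $\mathbf{0}$, reduce isometry to sum-freeness of the leaf labels, bound sum-free sets in $\Z_2^k$ by the disjoint-translate argument $X\cap(a+X)=\emptyset$, and attain the bound with a nontrivial coset of an index-$2$ subgroup: you use first coordinate $1$, the paper uses the odd-weight vectors. Your reduction is slightly more careful than the paper's. The paper asserts that the generating set is exactly the set of leaf labels, whereas you use only $X\subseteq S$ and note that extra generators can only strengthen the constraint.
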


\begin{proof}
Normalize an isometric embedding so the center maps to $\mathbf{0}$ (Cayley
graphs are vertex-transitive). The $q$ leaves map to distinct labels
$s_1,\dots,s_q$, and since the only edges are center--leaf, the generating set
is exactly $S=\{s_1,\dots,s_q\}$. Leaves are pairwise at distance $2$, so for
$i\ne j$ we need $s_i+s_j\notin S$ (else the Cayley distance would be $1$):
that is, $S$ is a \emph{sum-free} subset of $\Z_2^k$. Conversely sum-freeness
suffices: $s_i+s_j\ne\mathbf{0}$ (distinct labels) and $s_i+s_j\notin S$ force
$d_{\Cay}(s_i,s_j)\ge 2$, while the two-letter word $s_i,s_j$ realizes $2$.

It remains to find the maximum size of a sum-free set in $\Z_2^k$. If $S$ is
sum-free and $s\in S$, then $S$ and $S+s$ are disjoint subsets of equal size,
so $2|S|\le 2^k$, i.e.\ $|S|\le 2^{k-1}$. This is attained by the set of all
odd-weight vectors, which has size $2^{k-1}$ and is sum-free (a sum of two
odd-weight vectors has even weight); this extremal value is classical
\cite{rhemtulla-street1970} and is the elementary-abelian case of the general
determination of maximal sum-free densities by Green and Ruzsa
\cite{green-ruzsa2005}. Hence $K_{1,q}$ embeds in dimension $k$ iff
$q\le 2^{k-1}$, i.e.\ iff $k\ge\lceil\log_2 q\rceil+1$.
\end{proof}

\begin{corollary}[Minimality of the naive method, scoped]\label{cor:naive}
If $G$ has no even cycle, then no two distinct edges are $\varphi$-related, so
the $\varphi$-quotient embedding of \cite{fokam-p1} coincides with the naive
embedding ($k=n-1$). The naive dimension is therefore minimal within the
$\varphi$-quotient family but, by Theorem~\ref{thm:stars}, not in general.
\end{corollary}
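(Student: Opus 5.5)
The plan is to unpack the definition of the relation $\varphi$ from \cite{fokam-p1} and show that any relation between two distinct edges produces an even cycle. I am assuming that definition has the usual Djokovi\'c--Winkler flavour: two edges $ab$ and $xy$ are related when they sit opposite each other on an even closed structure, witnessed by a distance identity of the form $d(a,x)+d(b,y)\ne d(a,y)+d(b,x)$ or by antipodality on an even isometric cycle. I have not checked the exact definition against \cite{fokam-p1}.

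\emph{Step 1: classes are singletons.} Given distinct related edges $ab\neq xy$, the geodesics realizing the defining distances, together with the two edges, form a closed walk of even length. It is even because the two sides of the distance identity have the same parity offset. I would then extract a cycle from this walk. The extraction would use the fact that two shortest paths between the same endpoints can be shortcut, and that an even closed walk in which a fixed edge is used exactly once contains an even cycle through that edge. That last claim is not literally true for arbitrary walks. So the fallback is the block structure: in a graph with no even cycle, every block is a single edge or an odd cycle, since two odd cycles sharing at least one edge produce an even cycle. This reduces the check to a single odd cycle $C_{2r+1}$, where one verifies directly that no two distinct edges are $\varphi$-related: every edge has a unique opposite vertex, not an opposite edge. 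Across different blocks, distances split through cut vertices, so the distance identity is never satisfied.

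\emph{Step 2: the quotient dimension is $n-1$.} With all classes singletons, the number of classes is $t=|E|$. A cycle basis can be taken to consist of the odd-cycle blocks. These are pairwise edge-disjoint, and there are exactly $|E|-n+1$ of them. Each row of the signed cycle--class matrix is then supported on its own block's edges, with entries that do not cancel. The rows have disjoint supports, so $\rho=|E|-n+1$. Hence $k=t-\rho=n-1$, and the $\varphi$-quotient embedding is the naive one.

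\emph{Step 3: the two minimality claims.} Minimality within the $\varphi$-quotient family is now immediate, since that family contains only this one embedding. Non-minimality in general follows from Theorem~\ref{thm:stars}: the tree $K_{1,4}$ has no even cycle, yet $\kmin(K_{1,4})=3<4$.

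I expect the main obstacle to be Step 1, which depends on the exact definition of $\varphi$ in \cite{fokam-p1}. The block-decomposition route is the safest way to make it rigorous.
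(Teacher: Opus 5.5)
Your block-decomposition strategy is sound, but Step 1 has a real gap, and it sits exactly at the definitional point you left open. The two checks you only assert are ``one verifies directly'' on an odd cycle and ``the distance identity is never satisfied'' across a cut vertex. Both hold for one reading of $\varphi$ and fail for the others you allow.

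Under the Djokovi\'c--Winkler inequality you wrote down, the statement itself is false. In $C_{2r+1}$, $d(v_0,v_r)+d(v_1,v_{r+1})=2r\ne 2r-1=d(v_0,v_{r+1})+d(v_1,v_r)$. So $v_0v_1$ is related to $v_rv_{r+1}$, an edge at the opposite vertex $v_{r+1}$, contrary to your heuristic.

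Under the symmetric equalities alone, $d(u,x)=d(v,y)$ and $d(u,y)=d(v,x)$, your cross-block step fails. In the bowtie (triangles $wa_1a_2$ and $wb_1b_2$ sharing $w$), the edges $a_1a_2$ and $b_1b_2$ have all four cross distances equal to $2$.

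You must commit to the opposite-edge form: distinct endpoints, $d(u,x)=d(v,y)=\delta$ and $d(u,y)=d(v,x)=\delta+1$. This is the reading compatible with the ``$\varphi$ equalities'' the paper's proof invokes, and with it both checks are short.
\begin{itemize}
\item \emph{Across blocks.} Let $c$ be the cut vertex of the block of $uv$ on the way to the block of $xy$, and put $\alpha=d(u,c)$, $\beta=d(v,c)$, $p=d(c,x)$, $q=d(c,y)$. The conditions become $\alpha+p=\beta+q$ and $\alpha+q=\beta+p=\alpha+p+1$. These force $q=p+1$ and $\beta=\alpha+1$, hence $\alpha+p=\alpha+p+2$, a contradiction.
\item \emph{Inside an odd-cycle block.} Blocks are convex, so their distances are those of $G$. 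Write $f(j)=d(v_0,v_j)$, so that $d(v_1,v_j)=f(j-1)$. The two orientations of $xy=v_iv_{i+1}$ require $f(i-1)=f(i+1)=f(i)\pm 1$. But $f$ has its only local minimum at $j=0$ and attains its maximum $r$ on the adjacent pair $j=r,r+1$. So only $xy=uv$ survives.
\end{itemize}

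With that repair, your route genuinely differs from the paper's and is the sturdier of the two. The paper closes the two geodesics and the two edges into a closed walk of length $2d(u,x)+2$. It then asserts that in a graph with no even cycle, every even closed walk traverses each of its edges an even number of times. Your caveat about walk extraction is on point, because that claim is false as stated. The bowtie walk $a_1\,w\,b_1\,b_2\,w\,a_2\,a_1$ is precisely the paper's construction for $a_1a_2$ and $b_1b_2$, has length $6$, and uses each edge once. That argument uses only $d(u,x)=d(v,y)$, whereas your block computation shows the $+1$ clause is what is decisive.

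Steps 2 and 3 are fine. For singleton classes, $\rho$ equals the cyclomatic number $|E|-n+1$ in any connected graph, so $k=n-1$ does not even need the cactus structure.
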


\begin{proof}
Suppose $e=\{u,v\}\;\varphi\;f=\{x,y\}$ with $e\ne f$; by the incidence
property of $\varphi$ \cite{fokam-p1} the four endpoints are distinct. A
shortest $u$--$x$ path, the edge $f$, a shortest $y$--$v$ path (of the same
length as the $u$--$x$ path, by the $\varphi$ equalities), and the edge $e$
close up into a walk of even length $2d(u,x)+2$. In a graph all of whose
cycles are odd, every closed walk of even length must traverse each of its
edges an even number of times (its edge multiset supports no cycle), forcing
the walk to backtrack entirely and hence $\{u,v\}=\{x,y\}$. So all
$\varphi$-classes are singletons and the naive embedding results.
\end{proof}

\section{Odd cycles: the naive bound is tight}
\label{sec:oddcycles}

Stars show $\kmin$ can be exponentially below $n-1$; odd cycles show it can
equal $n-1$, so the universal upper bound is best possible.

Fix an odd cycle $C_m$, $m=2d+1$, with edges $e_0,\dots,e_{m-1}$ in cyclic
order and edge generators $s_0,\dots,s_{m-1}$ (the generator of $e_i$ is
$\lambda(v_i)\oplus\lambda(v_{i+1})$); going once around gives
$\sum_i s_i=0$. Encode dependencies by the \emph{dependency code}
\[
D=\Bigl\{x\in\F_2^m:\ \sum_{i:x_i=1}s_i=0\Bigr\},\qquad \mathbf{1}\in D,
\]
a linear code with $\dim D=m-\mathrm{rank}\{s_i\}$. For a cyclic interval
(arc) $W\subseteq\Z_m$ the endpoint labels differ by $\sum_{i\in W}s_i$, and
for any $x\in D$ the subset $W\triangle x$ sums to the same element. Isometry
forces
\begin{equation}\label{eq:arc}
\min_{x\in D}\,|W\triangle x|=\min\bigl(|W|,\,m-|W|\bigr)
\quad\text{for every arc } W. \tag{$\dagger$}
\end{equation}

\begin{lemma}[Cyclic interval lemma]\label{lem:interval}
Let $m$ be odd and $x\subseteq\Z_m$ with $x\notin\{\emptyset,\Z_m\}$. Then
there is an arc $W$ with $|W\triangle x|<\min(|W|,m-|W|)$. This holds in the
covering-arc regime (proved below) and has been verified exhaustively by
computer for all odd $m\le 17$.
\end{lemma}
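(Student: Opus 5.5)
The plan is to prove the lemma for every odd $m$ by pairing a complementation symmetry with an averaging argument over arcs of one fixed length. This is more than the covering-arc regime mentioned in the statement. Write $m=2d+1$ and $a=|x|$, so that $0<a<m$.

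\emph{Step 1 (complementation).} If $W$ is an arc, then so is its complement $W^c=\Z_m\setminus W$. Moreover $|W^c\triangle x^c|=|W\triangle x|$, and $\min(|W^c|,m-|W^c|)=\min(|W|,m-|W|)$. So the lemma holds for $x$ if and only if it holds for $x^c$. Since $m$ is odd, exactly one of $a$ and $m-a$ is even. We may therefore assume that $a$ is even, with $2\le a\le m-1$. The hypothesis $x\notin\{\emptyset,\Z_m\}$ is used exactly here, to guarantee $a\ge 2$ after this reduction.

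\emph{Step 2 (choice of arc length).} We restrict to arcs $W$ of length $d+1$, for which $\min(|W|,m-|W|)=d$. For such $W$,
\[
|W\triangle x|=|W|+|x|-2|W\cap x|=d+1+a-2|W\cap x|.
\]
The desired inequality $|W\triangle x|<d$ is therefore equivalent to $2|W\cap x|>a+1$. Because $a$ is even, it suffices to find an arc of length $d+1$ with $|W\cap x|\ge a/2+1$, which is the same as $|W\cap x|>a/2$.

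\emph{Step 3 (averaging).} Let $W_j=\{j,j+1,\dots,j+d\}$ for $j\in\Z_m$; these are the $m$ arcs of length $d+1$. Each element of $\Z_m$ lies in exactly $d+1$ of them. Hence
\[
\sum_j|W_j\cap x|=a(d+1),
\]
and some $j$ satisfies
\[
|W_j\cap x|\ \ge\ \frac{a(d+1)}{2d+1}\ >\ \frac a2,
\]
where the last inequality uses $a>0$. Since $|W_j\cap x|$ is an integer and $a/2$ is an integer, this gives $|W_j\cap x|\ge a/2+1$. Substituting into Step 2 yields $|W_j\triangle x|\le d-1<d$, as required.

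The step I expected to be the obstacle is the choice of arc length. Averaging over arcs of length $\ell\le d$ fails: the mean of $|W\triangle x|$ is $\ell+a(1-2\ell/m)$, which is at least $\ell$. Arcs of length $d+1$ have the favourable threshold $\min=d$, but the averaging bound then gives only $|W\cap x|\ge\lceil a/2\rceil$-type information. This closes the gap only when $a$ is even, and that is exactly what the complementation symmetry of Step 1 provides. I would therefore double-check Step 1 carefully. If this argument holds, the lemma is true for all odd $m$, not only in the covering-arc regime, and Conjecture~\ref{conj:interval} is settled.
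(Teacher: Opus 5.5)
Your proof is correct, and it proves more than the paper does. The paper also starts with complementation, but uses it to normalize $|x|\le d$. It then handles only the covering-arc regime: if the support of $x$ lies in an arc $W$ of length $L\le d$, then $|W\triangle x|=L-|x|<L=\min(L,m-L)$. Supports that are too spread out are left to exhaustive computer search for odd $m\le 17$, and the general case is recorded as Conjecture~\ref{conj:interval}.

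You instead use complementation to normalize the \emph{parity} of $|x|$. Your Step~1 is sound: $W^c\triangle x^c=W\triangle x$ as sets, and the complement of an arc of length $d+1$ is an arc of length $d$ with the same threshold $d$. You then work with the single arc length $d+1$. This is the shortest length whose threshold $\min(|W|,m-|W|)=d$ lies below $|W|$, which is why your remark that averaging over lengths $\ell\le d$ cannot work is the right diagnosis. The double count $\sum_j|W_j\cap x|=|x|(d+1)$ gives an arc with $|W_j\cap x|>|x|/2$. Evenness of $|x|$ then upgrades this to $|W_j\cap x|\ge |x|/2+1$, that is, $|W_j\triangle x|\le d-1$. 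No case distinction on how $x$ is spread around the cycle is needed.

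Oddness of $m$ is used exactly where it must be. For even $m$ the statement fails: $x=\{0,2\}\subseteq\Z_4$ admits no such arc.

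The paper's argument yields a witness of length at most $d$, but only for concentrated supports. Yours yields a witness of length $d+1$ (or its complement, of length $d$) for every admissible $x$. So it proves the lemma for all odd $m$, and through the reduction in Theorem~\ref{thm:odd} it settles Conjecture~\ref{conj:interval}: $\kmin(C_m)=m-1$ for every odd cycle, with no computer verification needed.
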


\begin{proof}[Proof in the covering-arc regime, and verification status]
Arcs are closed under complementation and $|W^c\triangle x^c|=|W\triangle x|$,
so assume $w:=|x|\le d$. If the support of $x$ lies in an arc $W$ of length
$L\le d$, then $|W\triangle x|=L-w<L=\min(L,m-L)$, since $L\le d$ gives
$m-L\ge d+1>L$. The remaining regime---supports of weight at most $d$ so
spread that every covering arc is longer than $d$---was checked exhaustively
for all odd $m\le 17$ ($2^{17}-2$ subsets at $m=17$), with no counterexample.
\end{proof}

\begin{theorem}[Odd cycles require the naive dimension]\label{thm:odd}
For every odd $m\le 17$ (and every odd $m$ for which
Lemma~\ref{lem:interval} holds), $\kmin(C_m)=m-1$.
\end{theorem}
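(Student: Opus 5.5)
The proof has two halves: an upper bound from the companion construction, and a lower bound that reduces to the cyclic interval lemma.

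\emph{Upper bound.} The construction of \cite{fokam-p1} gives $\kmin(C_m)\le m-1$. Equivalently, take the naive embedding: $s_0,\dots,s_{m-2}$ are the standard basis vectors of $\Z_2^{m-1}$ and $s_{m-1}=\sum_{i<m-1}s_i$.

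\emph{Lower bound.} Take any isometric embedding $\lambda\colon C_m\to\Cay(\Z_2^k,S)$, with edge generators $s_i\in S$ as above. The $s_i$ span a subspace of dimension $\mathrm{rank}\{s_i\}=m-\dim D$, and this subspace lies in $\Z_2^k$. So it suffices to prove $\dim D=1$, i.e.\ $D=\{\emptyset,\Z_m\}$, since then $k\ge m-1$.

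\begin{enumerate}
\item \emph{Justify $(\dagger)$.} Suppose $W$ is an arc of length $L$, with endpoints $v_a,v_b$ where $d(v_a,v_b)=\min(L,m-L)$. For every $x\in D$, the multiset of generators indexed by $W\triangle x$ is a word summing to $\lambda(v_a)+\lambda(v_b)$. Its letters all lie in $S$, because each $s_i$ is a host edge label and hence a generator. Isometry says the word length of this difference equals $\min(L,m-L)$, so no word can be shorter. This gives the inequality $|W\triangle x|\ge\min(|W|,m-|W|)$ for all $x\in D$. Equality at $x=\emptyset$ is automatic, but only the inequality is needed.

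\item \emph{Apply Lemma~\ref{lem:interval}.} Suppose $D$ contained some $x\notin\{\emptyset,\Z_m\}$. The lemma supplies an arc $W$ with $|W\triangle x|<\min(|W|,m-|W|)$, contradicting step~1. Hence $D=\{\emptyset,\Z_m\}$, the rank of $\{s_i\}$ is $m-1$, and $k\ge m-1$.
\end{enumerate}

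Combined with the upper bound, this gives $\kmin(C_m)=m-1$ for every odd $m$ satisfying the lemma, in particular all odd $m\le 17$ by the stated verification.

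The only delicate step is step~1: one must check that the relevant words use letters from $S$ and that the word metric bounds below every such representation. Everything else is supplied by the lemma, whose general case is the real obstacle and is deferred to Conjecture~\ref{conj:interval}. If I attempted the lemma itself, I would try an averaging argument over all arcs of length $d$. For such arcs the inequality reads $|W\triangle x|<d$, i.e.\ $|W\cap x|>w/2$. Summing $|W\cap x|$ over the $m$ arcs of length $d$ gives exactly $dw$, so the average overlap is $dw/m$, which is slightly below $w/2$. Averaging therefore falls just short, and one would need to exploit the variation of overlap across arcs, using arc lengths other than $d$.
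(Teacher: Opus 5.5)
Your proof is correct and follows the paper's argument, with the justification of $(\dagger)$ spelled out: the naive construction gives the upper bound, and since $\mathrm{rank}\{s_i\}=m-\dim D$, any $x\in D\setminus\{\emptyset,\Z_m\}$ would, via Lemma~\ref{lem:interval}, yield a word shorter than the distance it must realize (one small slip, which you already note is irrelevant: for $|W|>d$ the equality case of $(\dagger)$ is $x=\Z_m$, not $x=\emptyset$). As an aside, your averaging idea does prove the lemma for every odd $m$ once each length-$d$ arc $A$ is paired with its complement: with $a=|A\cap x|$ and $w=|x|$ one has $|A\triangle x|=d+w-2a$ and $|A^c\triangle x|=d+1-w+2a$, both compared with the bound $d$, so $A$ or $A^c$ works unless $a=\lfloor w/2\rfloor$, and this cannot hold for all $m$ such arcs because their average $dw/m$ is not an integer when $0<w<m$ (as $\gcd(d,2d+1)=1$).
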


\begin{proof}
The upper bound is the construction of \cite{fokam-p1} (all classes
singletons, one cycle relation, $k=m-1$). For the lower bound, suppose
$\mathrm{rank}\{s_i\}\le m-2$; then $\dim D\ge 2$, so $D$ contains some
$x\notin\{\mathbf{0},\mathbf{1}\}$. By Lemma~\ref{lem:interval} some arc $W$
has $|W\triangle x|<\min(|W|,m-|W|)$, the graph distance of $W$'s endpoints;
the generator subset $W\triangle x$ sums to their label difference, a word
shorter than their distance, violating~\eqref{eq:arc}. Hence
$\mathrm{rank}\{s_i\}=m-1$ and $k\ge m-1$. Independently, exhaustive search
confirms $\kmin(C_5)=4$ and $\kmin(C_7)=6$ directly.
\end{proof}

\begin{conjecture}\label{conj:interval}
Lemma~\ref{lem:interval} holds for every odd $m$; consequently
$\kmin(C_m)=m-1$ for every odd cycle.
\end{conjecture}

\subsection{Synthesis}

Table~\ref{tab:families} and Figure~\ref{fig:window} summarize. The value
$\kmin$ ranges over the whole window
$[\max(\diam,\lceil\log_2 n\rceil),\,n-1]$, both ends are achieved, and the
position within the window is governed by how much linear dependency the cycle
structure permits among edge generators---exactly the rank $\rho$ that the
quotient framework of \cite{fokam-p1} computes.

\begin{table}[t]
\centering
\caption{Exact binary dimension for several families against the lower bound
of Theorem~\ref{thm:dimlb}.}
\label{tab:families}
\begin{tabular}{lccc}
\toprule
family & $\kmin$ & lower bound & status\\
\midrule
hypercube $Q_t$ & $t$ & $t$ & attained\\
complete $K_{2^t}$ & $t$ & $t$ & attained\\
even cycle $C_{2d}$ & $d$ & $d$ & attained\\
Petersen & $4$ & $4$ & attained\\
star $K_{1,q}$ & $\lceil\log_2 q\rceil+1$ & $\lceil\log_2(q{+}1)\rceil$
 & gap $\le 1$\\
odd cycle $C_{2d+1}$ & $2d$ & $\max(d,\lceil\log_2(2d{+}1)\rceil)$
 & gap $\approx d$\\
\bottomrule
\end{tabular}
\end{table}

\begin{figure}[t]
\centering
\includegraphics[width=.95\linewidth]{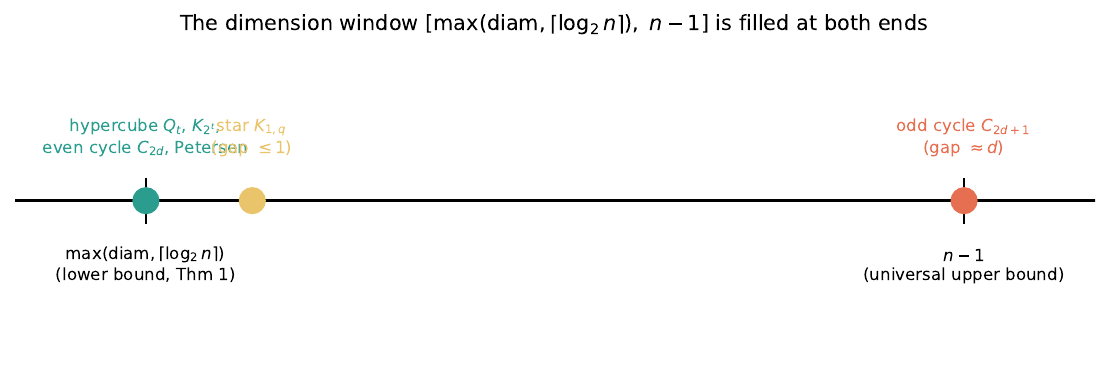}
\caption{The dimension window. $\kmin$ spans
$[\max(\diam,\lceil\log_2 n\rceil),\,n-1]$; hypercubes, complete graphs, even
cycles, and the Petersen graph sit at the lower end, stars one step above it,
and odd cycles at the upper end.}
\label{fig:window}
\end{figure}

\section{The abelian dividend}
\label{sec:dividend}

The most informative experiment is a census. We ran the embedding pipeline of
\cite{fokam-p1}---with the compactification stage searching \emph{general}
finite-index sublattices of the free part of the universal group, enumerated
in Hermite normal form and each candidate certified by an exact breadth-first
distance check---on all $995$ connected graphs on $2\le n\le 7$ vertices
\cite{read-wilson1998}, and compared each best certified abelian host with the
best binary host produced by the binary ($\varphi$-quotient) pipeline.

\subsection{Methodology and guarantees}
\label{subsec:methodology}

Three properties of the experiment should be stated before its numbers.
First, every reported abelian host is \emph{certified}: the pipeline verifies
all $\binom{n}{2}$ distances against a breadth-first search of the actual
finite Cayley graph, so a reported host is a genuine isometric embedding, not
a heuristic estimate. Second, both sides of the comparison are nevertheless
\emph{algorithmic upper bounds}: the ``best binary host'' is the output of the
binary pipeline (an upper bound on $2^{\kmin}$) and the ``best abelian host''
is the smallest certified host found by the portfolio (an upper bound on
$\nu$). ``Strictly smaller'' therefore means that the abelian pipeline
certifiably beat the binary pipeline; since each abelian win is a certified
embedding, $\nu$ really is below the binary pipeline's host for those graphs,
so the \emph{direction} of the phenomenon is robust even though individual
optima are not claimed. Third, the enumeration is conservative: for a small
number of computationally hard instances the sublattice search was restricted
(full Hermite-normal-form enumeration for free rank at most two, diagonal
folds beyond), which can only under-count compact hosts; the dividend reported
below is therefore, if anything, understated. Complete per-graph records and
the runner scripts accompany the paper.

\subsection{Results}

\begin{figure}[t]
\centering
\includegraphics[width=.98\linewidth]{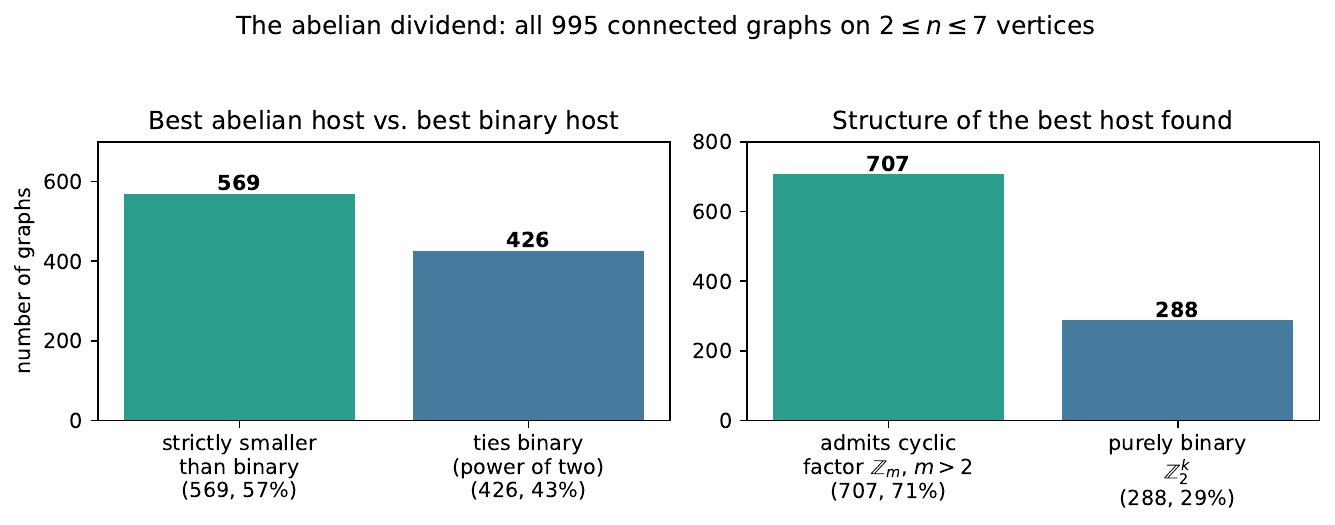}
\caption{The abelian dividend over all $995$ connected graphs on
$2\le n\le 7$ vertices. Left: a majority of graphs gain strictly from
non-binary hosts. Right: $71\%$ admit a non-involutive cyclic factor in the
best host found.}
\label{fig:census}
\end{figure}

The distribution is shown in Figure~\ref{fig:census}:
\begin{itemize}
\item $569$ graphs ($57.2\%$) receive a certified abelian host strictly
smaller than their best binary host;
\item the remaining $426$ ($42.8\%$) tie---their best host found is a power
of two;
\item $707$ graphs ($71.1\%$) admit a best host containing a cyclic factor
$\Z_m$ with $m>2$; only $288$ ($28.9\%$) are purely binary.
\end{itemize}
When the dividend is strict, its typical size is modest: the median
compression over the $569$ winners is $1.6\times$, with a maximum of
$9\times$, and eleven graphs gain a factor of four or more. Only $17$ of the
$995$ graphs attain the order floor $\max(n,2\,\diam)$ of
Theorem~\ref{thm:orderlb} exactly (Figure~\ref{fig:boundsfit}): the floor is
the signature of highly structured hosts (cycles, paths, circulants, and the
graphs of Theorem~\ref{thm:equality}), not of the typical graph.
Figure~\ref{fig:gallery} displays six certified non-binary embeddings with
their explicit group labels; each was re-verified against a full
breadth-first computation of the host's metric, and four of the six attain
the order floor.

\begin{figure}[t]
\centering
\includegraphics[width=.75\linewidth]{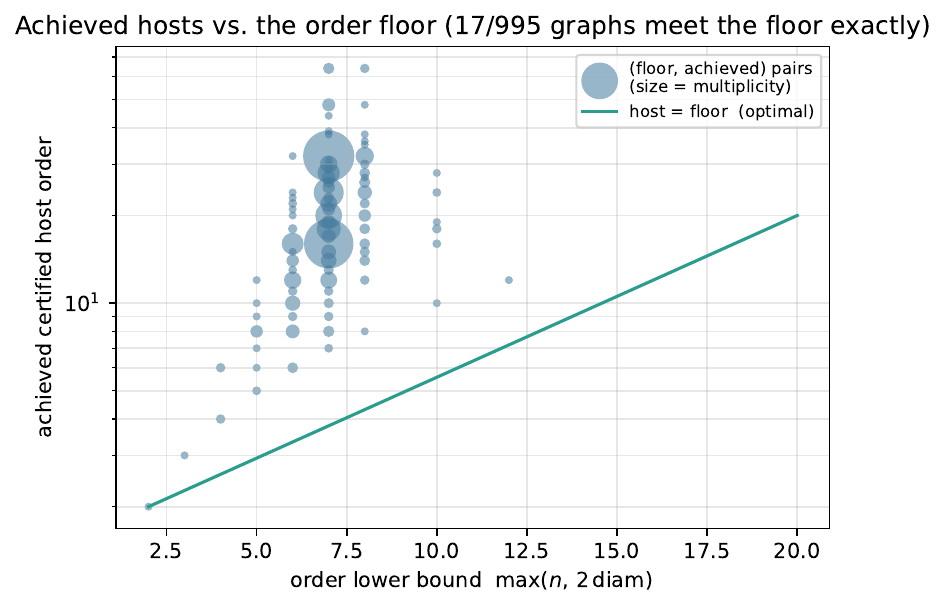}
\caption{Achieved certified host orders against the order floor
$\max(n,2\,\diam)$ across the census. The diagonal is optimality;
$17/995$ graphs attain it.}
\label{fig:boundsfit}
\end{figure}

\begin{figure}[t]
\centering
\includegraphics[width=.7\linewidth]{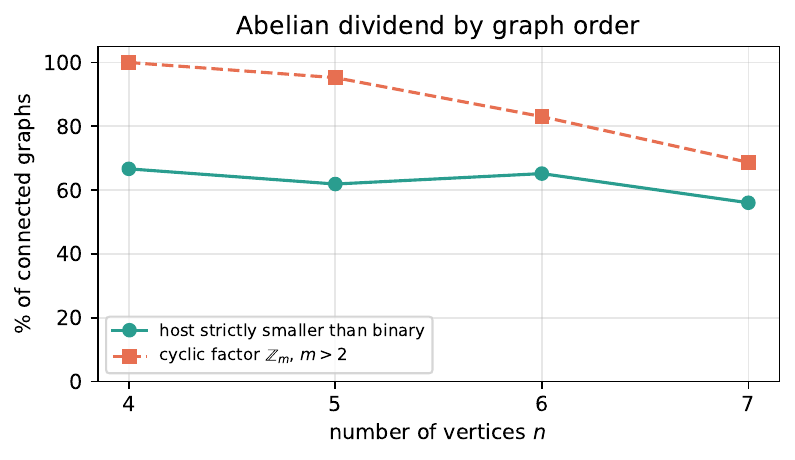}
\caption{The dividend by graph order. The strict-dividend fraction is stable
around $56$--$65\%$ on $4\le n\le 7$; whether it persists, grows, or declines
at larger $n$ is an open empirical question.}
\label{fig:byn}
\end{figure}

\begin{figure}[t]
\centering
\includegraphics[width=.98\linewidth]{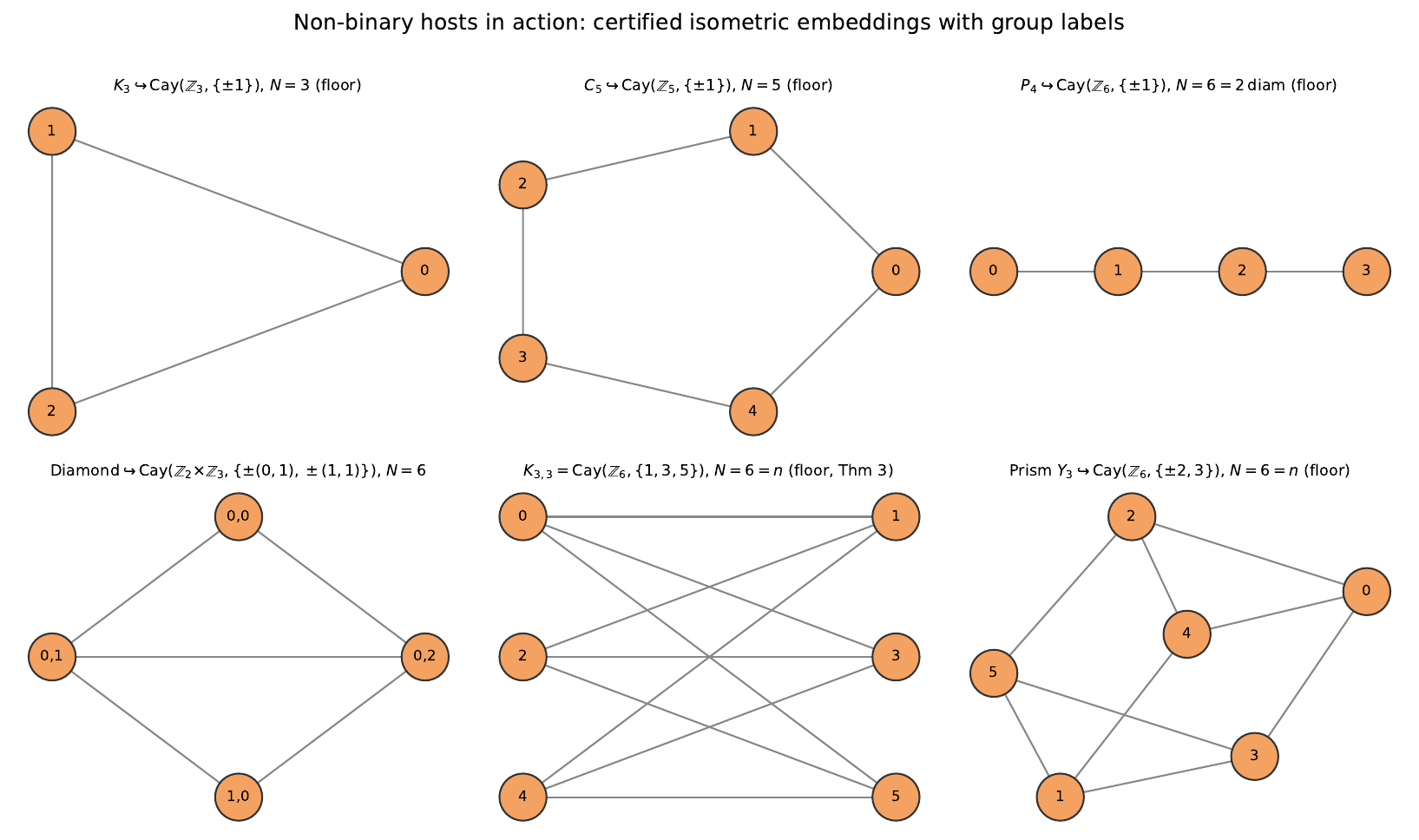}
\caption{Six certified non-binary embeddings with explicit group labels. Four
attain the order floor; the diamond illustrates a non-diagonal sublattice fold
\cite{fokam-p1}, and $K_{3,3}$ illustrates Theorem~\ref{thm:equality}.}
\label{fig:gallery}
\end{figure}

\begin{remark}[Structural reading]\label{rem:structure}
The universal group of a partition typically has free rank at least one:
whenever some class is crossed with nonzero net sign by some cycle, an
unbounded direction survives the quotient, and the compactification stage must
fold it into a finite cyclic factor $\Z_N$. The dividend arises because the
metrically admissible fold modulus $N$ is usually far below the $2^j$ that a
purely binary refolding would require: a free direction of label range $R$
folds isometrically at any modulus beyond $R+\diam(G)\gamma$
\cite{fokam-p1}, and moduli that are odd or composite are generically
available. Cyclic factors are thus not the signature of rare global symmetry;
they are the default product of folding free directions, and the graphs that
remain purely binary are those whose cycle structure is rich enough (many
short even cycles, odd-cycle interactions) to kill all free directions at the
quotient stage. The binary host retains a different and genuine distinction:
it is the \emph{universally guaranteed} construction, the one that never
fails and never needs a fold search (Corollary~1 of \cite{fokam-p1}), and it
remains optimal for a substantial minority of graphs. In the vocabulary of the
title: $\Z_2^k$ is the guaranteed floor of the theory, and the abelian
dividend---strict for a majority of small graphs---is what the general theory
pays on top of it.
\end{remark}

\begin{remark}[Scope and trend]\label{rem:trend}
Figure~\ref{fig:byn} shows a mild decline of the strict-dividend fraction from
$n=6$ ($65\%$) to $n=7$ ($56\%$). Whether the dividend persists at larger
orders, and how it interacts with density and girth, are open empirical
questions that the present census cannot settle; we state them as such rather
than extrapolate.
\end{remark}

\subsection{Benchmark families}

On structured inputs the dividend is large. Table~\ref{tab:benchmarks}
re-embeds several benchmark families. Where the graph is itself an abelian
Cayley graph, the value of $\nu$ is \emph{proved} by
Theorem~\ref{thm:equality} and the pipeline output is reported only as the
algorithmic upper bound; the case of $K_{3,3}=\Cay(\Z_6,\{1,3,5\})$ (a
circulant) is instructive, since the initializer portfolio does not propose
the circulant partition and the pipeline alone would report the weaker bound
$8$.

\begin{table}[t]
\centering
\caption{Benchmark families. ``bin'' is the binary pipeline's host order;
``best $\Gamma$'' the smallest certified abelian host; OPT marks host
$=\max(n,2\,\diam)$. Entries marked $^{\ast}$ are proved optimal by
Theorem~\ref{thm:equality}; the remaining entries are certified upper
bounds.}
\label{tab:benchmarks}
\begin{tabular}{lccccc}
\toprule
graph & $n$ & floor & bin & best $\Gamma$ & order\\
\midrule
ring $C_{12}$ & 12 & 12 & 64 & $\Z_{12}$ & 12\ \,OPT$^{\ast}$\\
ring $C_{16}$ & 16 & 16 & 256 & $\Z_{16}$ & 16\ \,OPT$^{\ast}$\\
path $P_{16}$ & 16 & 30 & 32768 & $\Z_{30}$ & 30\ \,OPT\\
circulant $C_{12}(1,2)$ & 12 & 12 & 64 & $\Z_{12}$ & 12\ \,OPT$^{\ast}$\\
grid $4\times 4$ & 16 & 16 & 64 & $\Z_6\times\Z_6$ & 36\\
$Q_3$ & 8 & 8 & 8 & $\Z_2^3$ & 8\ \,OPT$^{\ast}$\\
$K_{3,3}$ & 6 & 6 & 16 & $\Z_6$ & 6\ \,OPT$^{\ast}$\\
Petersen & 10 & 10 & 16 & $\Z_2^4$ & 16\\
Desargues & 20 & 20 & 32 & $\Z_2^5$ & 32\\
\bottomrule
\end{tabular}
\end{table}

\section{Exactly solved families and the Petersen problem}
\label{sec:solved}

Theorem~\ref{thm:equality} resolves $\nu$ for every graph that is itself an
abelian Cayley graph, and gives a clean obstruction for those that are not.

\begin{corollary}[Exactly solved families]\label{cor:solved}
\begin{itemize}
\item[(i)] Cycles: $\nu(C_m)=m$ for all $m\ge 3$; in particular the odd
cycles, which need binary host $2^{m-1}$, collapse to host $m$.
\item[(ii)] Complete graphs: $\nu(K_n)=n$, via
$K_n=\Cay(\Z_n,\Z_n\setminus\{0\})$.
\item[(iii)] Circulants: $\nu(C_n(d_1,\dots,d_r))=n$; in particular
$\nu(K_{3,3})=6$ via $K_{3,3}=\Cay(\Z_6,\{1,3,5\})$.
\item[(iv)] Paths: $\nu(P_k)=2(k-1)$, attained by
$P_k\hookrightarrow C_{2(k-1)}$; this upgrades the path-into-cycle stretching
rule to an optimality theorem.
\end{itemize}
\end{corollary}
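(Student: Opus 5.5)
Parts (i)--(iii) follow from Theorem~\ref{thm:equality} once each graph is exhibited as an abelian Cayley graph. For (i), $C_m=\Cay(\Z_m,\{\pm1\})$ for $m\ge 3$. For (ii), $K_n=\Cay(\Z_n,\Z_n\setminus\{0\})$, since every nonzero difference is a generator. For (iii), a circulant is by definition $\Cay(\Z_n,\{\pm d_1,\dots,\pm d_r\})$, so the theorem gives $\nu=n$ directly. For $K_{3,3}$ I will check that $\Cay(\Z_6,\{1,3,5\})$ joins $x$ and $y$ exactly when $x-y$ is odd. Its edges therefore go precisely between the even residues $\{0,2,4\}$ and the odd residues $\{1,3,5\}$, with all nine cross pairs present, which is $K_{3,3}$. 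The comparison with the binary host $2^{m-1}$ for odd cycles is Theorem~\ref{thm:odd}; for general odd $m$ it is only what the conjecture predicts, and I will phrase that remark accordingly.

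For (iv), set $d=\diam(P_k)=k-1$, with $k\ge 3$ so that Theorem~\ref{thm:orderlb} applies. The lower bound is immediate from that theorem: $\nu(P_k)\ge\max(k,2(k-1))=2(k-1)$. For the upper bound, label the path vertices $v_0,\dots,v_{k-1}$ and send $v_i\mapsto i\in\Z_{2(k-1)}$ inside $\Cay(\Z_{2(k-1)},\{\pm1\})=C_{2(k-1)}$. The map is injective because $0\le i\le k-1<2(k-1)$.

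The only real verification is isometry. In $C_N$ with $N=2(k-1)$, the distance between $i$ and $j$ is $\min(|i-j|,N-|i-j|)$. Since $|i-j|\le k-1=N/2$, the minimum is $|i-j|$, which equals $d_{P_k}(v_i,v_j)$. This gives $\nu(P_k)\le 2(k-1)$, and hence equality.

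I expect no step to be a genuine obstacle. The one point needing care is the degenerate range. For $k=2$, $P_2=K_2=\Cay(\Z_2,\{1\})$ gives $\nu=2=2(k-1)$ directly, since Theorem~\ref{thm:orderlb} assumed $n\ge3$; Theorem~\ref{thm:equality} also covers this case. Likewise, for cycles with $m\ge3$ the hypothesis of Theorem~\ref{thm:orderlb} holds. I will note these edge cases explicitly so that (i)--(iv) are fully covered.
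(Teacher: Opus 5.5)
Your proposal is correct and is the paper's argument. Parts (i)--(iii) follow by exhibiting each graph as an abelian Cayley graph and applying Theorem~\ref{thm:equality}, and (iv) follows from the floor $2\,\diam(P_k)$ of Theorem~\ref{thm:orderlb} together with the isometric inclusion $P_k\hookrightarrow C_{2(k-1)}$. You also supply details the paper leaves implicit: the explicit isometry check, the $k=2$ case, and the accurate caveat that the binary host $2^{m-1}$ for odd cycles is proved only for $m\le 17$. One small correction to your edge cases: $k=1$ must be excluded outright, since $\nu(P_1)=1\ne 0$.
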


\begin{proof}
(i)--(iii) are abelian Cayley graphs, so Theorem~\ref{thm:equality} applies.
(iv) The lower bound is $2\,\diam=2(k-1)$ by Theorem~\ref{thm:orderlb}, and
$C_{2(k-1)}$ contains $P_k$ isometrically.
\end{proof}

\begin{remark}[The Petersen problem]\label{rem:petersen}
The Petersen graph is vertex-transitive but, famously, not a Cayley
graph---indeed it is the smallest vertex-transitive non-Cayley graph
\cite{mckay-praeger1994}. By Theorem~\ref{thm:equality} the strict inequality
forces $\nu(\mathrm{Petersen})\ge n+1=11$, while the binary embedding into the
Clebsch graph \cite{fokam-p1} gives $\nu(\mathrm{Petersen})\le 16$.
Determining the exact value of $\nu(\mathrm{Petersen})\in[11,16]$ is, to our
knowledge, open.
\end{remark}

\section{Conclusion}
\label{sec:conclusion}

We have bounded the binary dimension and the host order of isometric
embeddings of graphs into abelian Cayley graphs, determined both exactly for
several families, and shown that the dimension window
$[\max(\diam,\lceil\log_2 n\rceil),\,n-1]$ is filled at both ends---by
hypercubes, complete graphs, even cycles, and the Petersen graph at the lower
end (via the geodesic-independence and injectivity bounds), by stars one step
above it (via maximum sum-free sets), and by odd cycles at the upper end (via
the cyclic-interval lemma). The exhaustive census of all $995$ connected
graphs on at most seven vertices exhibits the abelian dividend: a majority of
small graphs admit certified hosts strictly smaller than their best binary
hosts, cyclic factors appear in $71\%$ of the best hosts found, and only $17$
graphs sit exactly on the order floor. The binary host retains its role as
the universally guaranteed construction; the general abelian theory is what
converts that guarantee into strict compression for most graphs.

Three problems remain open: the cyclic interval lemma
(Conjecture~\ref{conj:interval}), which would give $\kmin(C_m)=m-1$ for all
odd $m$ unconditionally; the exact value of
$\nu(\mathrm{Petersen})\in[11,16]$; and the behaviour of the dividend as $n$
grows (Remark~\ref{rem:trend}). The construction underlying these bounds is
developed in the companion paper \cite{fokam-p1}, and the harmonic analysis
the embeddings support in the companion papers \cite{fokam-p3,fokam-p4}.

\section*{Data and code availability}
The full census records (one certified record per graph), the corrected
embedding pipeline, and the runner scripts are available from the authors and
will be archived with the final version. All graphs are taken from the atlas
of Read and Wilson \cite{read-wilson1998}.

\section*{Declaration on the use of AI}
An AI assistant (Anthropic's Claude) was used for software development and
debugging of the verification pipeline and for language editing. All
mathematical content, directions, and conclusions are the authors' own; all
computational claims were verified by the certified pipeline described in
Section~\ref{subsec:methodology}.

\section*{Acknowledgments}
The authors thank Solutum Engineering for internet and computing support.

\end{document}